\documentclass[12pt]{amsart}

\usepackage{amscd,amssymb}
\usepackage[all]{xy}
\usepackage[plainpages,backref,urlcolor=blue]{hyperref}
\usepackage[utf8]{inputenc}
\usepackage{mathtools}

\theoremstyle{plain}
\newtheorem{thm}[subsection]{Theorem}
\newtheorem{lem}[subsection]{Lemma}
\newtheorem{prop}[subsection]{Proposition}
\newtheorem{cor}[subsection]{Corollary}

\theoremstyle{definition}
\newtheorem{rk}[subsection]{Remark}

\newtheorem{ex}[subsection]{Example}

\numberwithin{equation}{section}
\newcommand{\OO}{\mathcal O}

\newcommand{\Z}{\mathbb Z}
\newcommand{\Q}{\mathbb Q}

\newcommand{\C}{\mathbb C}
\newcommand{\PP}{\mathbb P}
\newcommand{\N}{\mathbb N}

\DeclareMathOperator{\pd}{pd}
\DeclareMathOperator{\hd}{hd}

\DeclareMathOperator{\codim}{codim}

\DeclareMathOperator{\reg}{reg}

\begin{document}

\title[On the degree of the singular subscheme  of hypersurfaces]{On the degree of the singular subscheme of hypersurfaces in $\PP^n$}

\author[Alexandru Dimca]{Alexandru Dimca$^1$}
\address{Universit\'e C\^ote d'Azur, CNRS, LJAD, France and Simion Stoilow Institute of Mathematics,
P.O. Box 1-764, RO-014700 Bucharest, Romania}
\email{Alexandru.Dimca@univ-cotedazur.fr}

\author[Gabriel Sticlaru]{Gabriel Sticlaru}
\address{Faculty of Mathematics and Informatics,
Ovidius University
Bd. Mamaia 124, 900527 Constanta,
Romania}
\email{gabriel.sticlaru@gmail.com }

\thanks{$^1$ partial support from the project ``Singularities and Applications'' - CF 132/31.07.2023 funded by the European Union - NextGenerationEU - through Romania's National Recovery and Resilience Plan.}

\subjclass[2010]{Primary 14J70; Secondary   13D02, 14B05}

\keywords{Jacobian ideal, Jacobian algebra, exponents,  Tjurina numbers, graded Betti numbers}

\begin{abstract}
Explicit formulas determining the dimension and the degree of 
the singular subscheme of hypersurfaces in $\PP^n$ are given in terms of the graded Betti numbers of the minimal free resolution of the corresponding Jacobian algebra. This gives in particular new restrictions which must be satisfied by such graded Betti numbers. We define a homologically strictly plus-one generated hypersurface, and show that such a hypersurface has a singular locus of dimension $n-2$ under some conditions. 
\end{abstract}

\maketitle

\section{Introduction}

Let $S=\C[x_0, \ldots, x_n]$ be the polynomial ring in $n+1 \geq 3$ variables $x_0, \ldots, x_n$ with complex coefficients, and let $X:f=0$ be a reduced hypersurface of degree $d\geq 3$ in the complex projective space $\PP^n$. 
We denote by $J_f$ the Jacobian ideal of $f$, i.e. the homogeneous ideal in $S$ spanned by the partial derivatives 
$$f_i=\partial_{x_i}f $$
of $f$,  for $i=0, \ldots , n$,  and  by $M(f)=S/J_f$ the corresponding graded quotient ring, called the Jacobian (or Milnor) algebra of $f$.
Consider the general form of the minimal graded free resolution of the Milnor algebra $M(f)$, which exists by Hilbert Syzygy Theorem, see for instance \cite[Theorem 1.13]{Eis}. One has

\begin{equation}
\label{res2A}
0 \longrightarrow E_n \longrightarrow \ldots 
 \longrightarrow E_2
   \longrightarrow E_1
   \longrightarrow S^{n+1}(1-d)
   \longrightarrow S,
   \end{equation}
where 
\begin{equation}
\label{res2AA}
E_k=\bigoplus_{i_k=1}^{m_k} S(1-d-d_{k,i_k}) 
\end{equation}
for some integers $d_{ij} \geq 0$, with $m_1 \geq n$ and $m_j \geq 0$ for $j \geq 2$.
We assume that 
$$d_{k,1} \leq \ldots \leq d_{k,m_k} \text{ for } k=1, \ldots, n  
 $$ 
and call the ordered sequences of degrees 
$${\bf d}_k=(d_{k,1}, \ldots, d_{k,m_k}) \text{ for } k=1, \ldots, n  
 $$ 
  the {\it graded Betti numbers of the Jacobian algebra} $M(f)$, since they determine and are determined by the usual
graded Betti numbers of the Jacobian algebra $M(f)$ as defined for instance in \cite{Eis}. 

It is known that there is a unique polynomial $P(M(f))(u) \in \Q[u]$, called the {\it Hilbert polynomial} of $M(f)$, and an integer $k_0\in \N$ such that
\begin{equation}
\label{Hpoly}
 \dim M(f)_k= P(M(f))(k)
\end{equation}
for all $k \geq k_0$. We denote by $\Sigma$ the singular subscheme of $X$, which is defined by the Jacobian ideal $J(f)$. The general theory of Hilbert polynomials says that the degree of  $P(M(f))$ is given by the dimension   ${\Sigma}$.
Moreover, if $\dim \Sigma=\delta$, for some $0 \leq \delta \leq n-2$, then the leading term of the polynomial $P(M(f))(k)$ has the form
\begin{equation}
\label{Ppoly}
  \frac{\deg \Sigma}{\delta !}k^{\delta},
\end{equation}
see for instance Theorem 7.5 and the definitions following it in \cite[Chapter 1]{H}. 
In particular, the assumption $\dim \Sigma=0$ implies that the polynomial
$P(M(f))$ is a constant, namely the total Tjurina number of $X$, given by
\begin{equation}
\label{ab0}
P(M(f))=\tau(X)=\sum_{s \in \Sigma} \tau(X,s),
\end{equation}
where $\tau(X,s)$ denotes the Tjurina number of the isolated singularity $(X,s)$.

The  main result of this note is the following computation of the dimension $\dim \Sigma$ and of the degree $\deg \Sigma$ of the singular subscheme $\Sigma$ of $X:f=0$ in terms of the graded Betti numbers of the Jacobian algebra $M(f)$
introduced in \eqref{res2A}. To simplify the statement, we set
\begin{equation}
\label{eqS}
\sigma_j=\sum_{k=1}^n \left((-1)^{k+1}\sum_{i_k=1}^{m_k}d_{k,i_k}^j\right),
\end{equation}
for any integer $0 \leq j \leq n$. Note that
$$\sigma_0=\sum_{k=1}^n (-1)^{k+1}m_k.$$

 \begin{thm}
\label{thm1}
For the minimal resolution \eqref{res2A} of the Jacobian algebra $M(f)$ of a reduced hypersurface $X:f=0$ of degree $d$ in $\PP^n$, one has the following.
\begin{enumerate}

\item 
For any such hypersurface, one has $\sigma_0=n$ and
$\sigma_1=d-1.$

\item One has  $\delta= \dim \Sigma$ for some integer $\delta \in [0,n-2] $ if and only if 
$$ \sigma_j=(-1)^{j+1}(d-1)^j$$
for any integer $1 \leq j < n-\delta$ and $\sigma_{n-\delta} \ne (-1)^{n-\delta+1}(d-1)^{n-\delta}$.

 If this holds, then
$$\deg \Sigma= \frac{1}{(n-\delta)!}\left( (d-1)^{n-\delta}+(-1)^{n-\delta}\sigma_{n-\delta}\right) >0.$$

\item The hypersurface $X$ is smooth if and only if one has
$$m_k=\binom{n+1}{k+1} \text{ and } d_{k,1}=d_{k,2}=\ldots =d_{k,m_k}=k(d-1)$$
for any $k=1, \ldots, n$.

\end{enumerate}

\end{thm}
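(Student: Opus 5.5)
The plan is to compute the Hilbert series of $M(f)$ from the resolution \eqref{res2A} and then compare it with the Hilbert polynomial description \eqref{Hpoly}--\eqref{Ppoly}. Exactness of \eqref{res2A} gives
$$HS(M(f))(t)=\frac{Q(t)}{(1-t)^{n+1}},\qquad Q(t)=1-(n+1)t^{d-1}+\sum_{k=1}^n(-1)^{k+1}\sum_{i_k}t^{d-1+d_{k,i_k}}.$$
Write $t=e^{s}$, or equivalently expand $Q$ around $t=1$. Since $\dim M(f)_k$ is eventually a polynomial of degree $\delta$, $Q(t)$ has a zero of order exactly $n-\delta$ at $t=1$, and $Q(t)=(1-t)^{n-\delta}R(t)$ with $R(1)=\deg\Sigma$. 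This is the standard fact that the multiplicity equals the numerator of the reduced Hilbert series evaluated at $1$. Hence the whole theorem reduces to computing the derivatives $Q^{(j)}(1)$, or more conveniently the derivatives of $g(s)=Q(e^s)$ at $s=0$, since the vanishing orders coincide.

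Next I compute these derivatives. We have
$$g(s)=1-(n+1)e^{(d-1)s}+\sum_k(-1)^{k+1}\sum_{i_k}e^{(d-1+d_{k,i_k})s}.$$
Expanding $(d-1+d_{k,i})^j$ binomially, the $j$-th derivative at $0$ is
$$g^{(j)}(0)=\delta_{j0}-(n+1)(d-1)^j+\sum_{a=0}^{j}\binom{j}{a}(d-1)^{j-a}\sigma_a.$$
The conditions $g(0)=0$ and $g'(0)=0$ hold automatically because $\delta\le n-2$, i.e. the order of vanishing is at least $2$. This uses the fact that $\Sigma$ has codimension $\ge 2$ in $\PP^n$, which holds because $X$ is reduced; equivalently $\dim M(f)$ grows like a polynomial of degree $\le n-2$. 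These two conditions give $\sigma_0=n$ and then $\sigma_1=(n+1)(d-1)-n(d-1)=d-1$, which proves (1).

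For (2), I proceed by induction on $j$. Suppose $g^{(i)}(0)=0$ for all $i<j$ and that $\sigma_a=(-1)^{a+1}(d-1)^a$ for $1\le a<j$. Then
$$g^{(j)}(0)=\sigma_j-(n+1)(d-1)^j+n(d-1)^j+\sum_{a=1}^{j-1}\binom{j}{a}(-1)^{a+1}(d-1)^j.$$
The last sum equals $(d-1)^j\bigl(1+(-1)^{j}\bigr)$, using $\sum_{a=0}^{j}\binom{j}{a}(-1)^a=0$. So $g^{(j)}(0)=\sigma_j+(-1)^j(d-1)^j$. Therefore $g^{(j)}(0)=0$ if and only if $\sigma_j=(-1)^{j+1}(d-1)^j$, and the order of vanishing is exactly $n-\delta$ if and only if the stated conditions hold. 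At the first nonvanishing order $r=n-\delta$ we have $g(s)\sim R(1)(-s)^r$, so
$$\deg\Sigma=\frac{(-1)^r g^{(r)}(0)}{r!}=\frac{(d-1)^r+(-1)^r\sigma_r}{r!}.$$
Positivity follows because $\deg\Sigma>0$. The main obstacle is justifying carefully the link between $R(1)$ and the leading coefficient $\deg\Sigma/\delta!$. To do this, write $HS=R(t)/(1-t)^{\delta+1}$ and note that the coefficients of $t^k$ then equal $R(1)\binom{k+\delta}{\delta}+O(k^{\delta-1})$. The case $\delta=0$ is included, and there $R(1)=\tau(X)$.

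For (3), smoothness means $\Sigma=\emptyset$, so $J_f$ is generated by a regular sequence and the Koszul complex is the minimal resolution. This gives the stated $m_k$ and $d_{k,i}=k(d-1)$. Conversely, these data force $HS=\bigl(1-t^{d-1}\bigr)^{n+1}/(1-t)^{n+1}$, a polynomial. Hence $M(f)$ is Artinian and $\Sigma=\emptyset$.
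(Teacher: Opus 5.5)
Your proof is correct, but it takes a different route from the paper's. The paper works only with the Hilbert polynomial. It substitutes $k=s+d-1$ and expands each term of the alternating sum coming from \eqref{res2A} through the polynomials $A_j(a)$ of \eqref{eq1}. The coefficient of $s^j$ then becomes
$$B_j=c_{j,0}(\sigma_0-n)+\sum_{i=1}^{n-j}c_{j,i}\bigl((d-1)^i+(-1)^i\sigma_i\bigr).$$
This is a triangular system, and its top coefficient $c_{j,n-j}=\binom{n}{n-j}$ from Lemma \ref{lem1} produces the factor $1/(n-\delta)!$.

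You instead pass to the Hilbert series. You identify $n-\dim\Sigma$ with the order of vanishing of the numerator $Q$ at $t=1$, and $\deg\Sigma$ with $R(1)$. The substitution $t=e^s$ then turns derivatives into power sums. Because you did not shift by $t^{d-1}$, your derivatives mix the $\sigma_a$ binomially, and you need the alternating binomial identity to separate them. Working with $t^{1-d}Q(t)$ instead would give $(1-d)^j+\sigma_j$ directly for $j\geq 1$; this is the exact analogue of the paper's shift $k=s+d-1$.

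Your route relies on the standard dictionary between Hilbert series and Hilbert polynomial (pole order and multiplicity), which you justify adequately. The paper needs only \eqref{Ppoly} and an elementary lemma on binomial polynomials. In exchange, your converse in (3) is much shorter. The data force $Q(t)=(1-t^{d-1})^{n+1}$, so the Hilbert series is the polynomial $(1+t+\cdots+t^{d-2})^{n+1}$ and $M(f)$ is Artinian. The paper instead derives the identities $\sigma'_j=(-1)^{j-1}$ by repeated differentiation of $(1-t)^{n+1}$ and then invokes (2).

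One small point: in the reverse implication of (2), say explicitly that if the conditions hold, the order of vanishing is $n-\delta\leq n$. Hence the Hilbert series has a pole at $t=1$ and $\Sigma\neq\emptyset$, whereas the empty case gives order at least $n+1$.
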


When the hypersurface $X$ has only isolated singularities, that is when $\dim \Sigma =0$, then using a result by A. du Plessis and C.T.C. Wall quoted below in Theorem \ref{thmC}, we obtain the following new restrictions on the
graded Betti numbers of the Jacobian algebra $M(f)$. See Section 2 for more details.

\begin{cor}
\label{corC} 
If the hypersurface $X$ has  isolated singularities and we set 
$$r=d_{1,1}=\min_{i_1}d_{1,i_1},$$
then one has
$$(n!-1)(d-1)^n-n!r(d-1)^{n-1} \leq  
(-1)^{n}\sigma_n 
\leq (n!-1)(d-1)^n-      n!r(d-r-1)(d-1)^{n-2}.$$
\end{cor}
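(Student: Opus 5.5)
The plan is to combine Theorem \ref{thm1}(2) in the case $\delta=0$ with the du Plessis--Wall type bounds for the total Tjurina number recalled later as Theorem \ref{thmC}.

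First, I will express $\sigma_n$ through $\tau(X)$. Since $X$ has only isolated singularities, $\dim\Sigma=0$. Theorem \ref{thm1}(2) with $\delta=0$ then gives
$$\deg\Sigma=\frac{1}{n!}\left((d-1)^n+(-1)^n\sigma_n\right).$$
By \eqref{ab0}, the Hilbert polynomial is the constant $\tau(X)$, so $\deg\Sigma=\tau(X)$. Hence
$$(-1)^n\sigma_n=n!\,\tau(X)-(d-1)^n .$$

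Second, I will identify $r=d_{1,1}$ with the minimal degree of a Jacobian syzygy, i.e. of a relation $\sum_i a_if_i=0$ with $a_i\in S$ homogeneous and not all zero. A summand $S(1-d-d_{1,i})$ of $E_1$ maps to $S^{n+1}(1-d)$ by a column of homogeneous polynomials of degree $d_{1,i}$. The image of $E_1\to S^{n+1}(1-d)$ is the full syzygy module of $(f_0,\ldots,f_n)$. By minimality of \eqref{res2A}, the columns form a minimal generating set of this module. So the smallest degree of a nonzero syzygy is $\min_i d_{1,i}=d_{1,1}=r$. This is exactly the invariant $r$ (often written $\mathrm{mdr}(f)$) used in Theorem \ref{thmC}.

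Third, I will apply Theorem \ref{thmC}. I expect it to give, for a hypersurface in $\PP^n$ with isolated singularities,
$$(d-1)^n-r(d-1)^{n-1}\le \tau(X)\le (d-1)^n-r(d-r-1)(d-1)^{n-2}.$$
Multiplying by $n!$ and subtracting $(d-1)^n$ gives
$$(n!-1)(d-1)^n-n!\,r(d-1)^{n-1}\le (-1)^n\sigma_n\le (n!-1)(d-1)^n-n!\,r(d-r-1)(d-1)^{n-2},$$
which is the claim.

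The main point to check is that the hypotheses of Theorem \ref{thmC} match the present setting. For instance, the upper bound in du Plessis--Wall is usually stated under a restriction such as $r\le (d-1)/2$ or some modified form in the other range. If Theorem \ref{thmC} carries such a restriction, I will either note that the stated upper bound is the relevant one in that range, or check that it still holds outside it. The identification of $r$ in the second step is routine.
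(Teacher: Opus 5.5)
Your proposal is correct and matches the paper's argument: Theorem \ref{thm1}(2) with $\delta=0$ and \eqref{ab0} give $n!\,\tau(X)=(d-1)^n+(-1)^n\sigma_n$. You then multiply the bounds of Theorem \ref{thmC} by $n!$ and subtract $(d-1)^n$. Your worry about an extra hypothesis on $r$ is unnecessary, since the paper quotes Theorem \ref{thmC} with no such restriction.
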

We say that a reduced hypersurface $X:f=0$ is {\it homologically strictly plus-one generated}, for short HSPOG, if in the resolution
\eqref{res2A} one has $m_1=n+1$, $m_2=1$ and $d_{2,1}=d_{1,i}+1$
for some $1 \leq i \leq n+1$, and $m_k=0$ for any $k \geq 3$.
Strictly plus-one generated hyperplane arrangements have been introduced by Takuro Abe in \cite{Abe}, and the general reduced hypersurfaces which are strictly plus-one generated were defined and studied recently in \cite{Bour}. In particular, the fact that a strictly plus-one generated hypersurface is a HSPOG hypersurface follows from
\cite[Definition 1.1]{Bour}. The advantage of the notion of HSPOG hypersurface is that it does not depend on the ordering of the first order Jacobian syzygies of $f$, as the notion of a strictly plus-one generated hypersurface does, see  \cite[Example 4.8]{Bour}.
It is clear that for any hyperplane arrangement $X$ in $\PP^n$ one has
$\dim \Sigma=n-2$. The next result shows that this property is shared by some HSPOG hypersurfaces of degree not too low.
\begin{cor}
\label{corD} 
If $X$ is a HSPOG hypersurface of degree $d$ in $\PP^n$, then $\dim \Sigma =n-2$ if the following conditions hold
\begin{enumerate}

\item 
either $n=3$ or
$$\max \{d_{1,j }: 1 \leq j \leq n+1\} \leq n(d-2).$$

 \item 
$$d > \frac{n(n+1+\sqrt {n^2-2n-3})}{n+1}.$$
\end{enumerate}
In particular, the inequality (2) holds for $d\geq 4$ when $n=3$, for $d \geq 6$ for $n=4$ and for $d \geq 2n$ for any $n \geq 5$.

\end{cor}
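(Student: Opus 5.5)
Since $X$ is reduced, $\dim\Sigma\le n-2$. By Theorem \ref{thm1}(2), $\dim\Sigma=n-2$ (that is, $\delta=n-2$, so $n-\delta=2$) exactly when $\sigma_2\neq -(d-1)^2$; note $\sigma_1=d-1$ always holds by part (1). So I will argue by contradiction: assume $\sigma_2=-(d-1)^2$, which means $\delta\le n-3$, and derive an inequality that conditions (1) and (2) forbid.

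\emph{Setup.} Write $d_k=d_{1,k}$ for $k=1,\dots,n+1$, and let $i$ be the index with $d_{2,1}=d_i+1$. The HSPOG shape gives
$$\sigma_j=\sum_{k=1}^{n+1}d_k^j-(d_i+1)^j .$$
Then $\sigma_0=n$ holds automatically. From $\sigma_1=d-1$ we get
$$\sum_{k\ne i}d_k=d,$$
a sum of $n$ terms. Hence
$$\sigma_2=\sum_{k\ne i}d_k^2-2d_i-1 .$$
By Cauchy--Schwarz, $\sum_{k\ne i}d_k^2\ge d^2/n$.

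\emph{Contradiction when $n\ge 4$.} The assumption $\sigma_2=-(d-1)^2$ gives
$$2d_i=\sum_{k\ne i}d_k^2+(d-1)^2-1\ \ge\ \frac{d^2}{n}+d^2-2d .$$
Condition (1) gives $d_i\le \max_j d_{1,j}\le n(d-2)$. Combining the two,
$$\frac{d^2}{n}+d^2-2d\le 2n(d-2),$$
which is equivalent to
$$(n+1)d^2-2n(n+1)d+4n^2\le 0 .$$
The roots of this quadratic in $d$ are $\frac{n(n+1\pm\sqrt{n^2-2n-3})}{n+1}$. Condition (2) places $d$ strictly above the larger root, so the quadratic is positive there, a contradiction. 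The final sentence of the corollary then follows by evaluating the larger root: it equals $3$ for $n=3$, is about $5.4$ for $n=4$, and is $<2n$ for $n\ge5$ because $\sqrt{n^2-2n-3}<n-1$.

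\emph{The case $n=3$.} This is the step I expect to be the main obstacle, since condition (1) gives no bound on $d_i$. Here $\delta\le n-3$ forces $\delta=0$, i.e.\ isolated singularities, so Corollary \ref{corC} applies with $r=\min_k d_k$. Using $\sigma_2=-(d-1)^2$, I would express $d_i$ in terms of the other three $d_k$. Then I would compute
$$-\sigma_3=(d_i+1)^3-\sum_k d_k^3$$
and compare it with the upper bound of Corollary \ref{corC}, namely
$$5(d-1)^3-6r(d-r-1)(d-1).$$
The other three $d_k$ sum to $d$, so the term $(d_i+1)^3$, with $d_i\approx (d^2/3+d^2-2d)/2$ quadratic in $d$, grows like $d^6$. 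That should exceed the cubic upper bound for $d\ge4$, possibly after also using the lower bound of Corollary \ref{corC} or the relation $r\le d_k$ to handle small cases. If this crude growth estimate is not sharp enough at $d=4$, I would first fall back on the regularity bound $d_{1,k}\le n(d-2)$, valid for isolated singularities, which reduces $n=3$ to the computation above with $d>3$.
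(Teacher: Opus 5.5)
Your argument for $n\ge 4$ is the paper's argument, phrased by contradiction. From $\sigma_1=d-1$ you get $\sum_{k\ne i}d_k=d$. Then $\sigma_2=-(d-1)^2$, together with Cauchy--Schwarz and $d_i\le n(d-2)$, forces $(n+1)d^2-2n(n+1)d+4n^2\le 0$, which condition (2) excludes.

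For $n=3$ the paper uses exactly your fallback, and the case is not really an obstacle. If $\sigma_2=-(d-1)^2$, then $\Sigma$ is finite. It is also nonempty: a smooth $X$ has the Koszul resolution, with $E_3\neq 0$, so it is not HSPOG. Hence $(I_1)$ in Theorem \ref{thm2} gives $d_{1,k}\le 3(d-2)$. The same computation then applies, now with $g(d)=4(d-3)^2>0$ for $d>3$. You should make this the proof rather than a fallback.

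The route through Corollary \ref{corC} that you lead with is flawed as sketched. The sum $\sum_k d_k^3$ includes $d_i^3$, so the $d^6$ terms cancel and
\[
-\sigma_3=3d_i^2+3d_i+1-\sum_{k\ne i}d_k^3
\]
is only of order $d^4$. This still beats the cubic bound $5(d-1)^3$ for large $d$. At $d=4$, however, the crude bounds $d_i\ge 7$ and $\sum_{k\ne i}d_k^3\le 64$ only give $-\sigma_3\ge 105$, against the upper bound $-\sigma_3\le 135$. So that route would need a case-by-case check.

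There is also a numerical slip: for $n=4$ the larger root is $4+\frac{4}{5}\sqrt5\approx 5.79$, not about $5.4$. The conclusion $d\ge 6$ is unaffected.
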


Theorem \ref{thm1}  and Corollary \ref{corC} are proved in Section 2. 
In Section 3 we discuss first some restrictions on the Betti numbers of $M(f)$, with special attention for the case when $X$ has only isolated singularities, see Theorem \ref{thm2}.
 Then we show  that the formula which defines $\deg \Sigma$ in Theorem \ref{thm1} (2) gives an integer $T_0$ under rather general assumptions on the set of integers ${\bf d}_k$, see Proposition \ref{propGS}. On the other hand, the positivity of $T_0$ may fail, as well as the inequality for $T$ given in Corollary \ref{corC}, when 
the integers ${\bf d}_k$ are not the graded Betti numbers of a Jacobian algebra, see Examples \ref{ex1} and \ref{ex2}.
Corollary \ref{corD} is proved in Section 4, where we discuss also some general facts related to the projective dimension $\pd M(f)$ of the Jacobian algebra $M(f)$, see in particular Corollary \ref{cor101}.
This latter result, perhaps well known to the specialists, gives information on the length of the resolution \eqref{res2A} when we know the dimension $\dim \Sigma$. This information is particularly effective when $\dim \Sigma$ is small, and says that the length of the resolution \eqref{res2A} must be quite large in such a case.

\section{Proof of Theorem \ref{thm1} and of Corollary \ref{corC}}

We start by introducing some polynomials $A_j \in \Z[a]$. For any integer $a$ and $k > |a|$, one sets
\begin{equation}
\label{eq1}
n! \dim S_{k+a}=n! \binom{k+a+n}{n}=\sum_{j=0}^{n}A_j(a)k^j.
\end{equation}
One has the following.
\begin{lem}
\label{lem1}
For any $0 \leq j \leq n$, the polynomial $A_j$ has degree $n-j$ and its leading coefficient is
$\binom{n}{n-j}$.
\end{lem}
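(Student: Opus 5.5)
We expand the product directly. For $k>|a|$ one has
$$n!\binom{k+a+n}{n}=\prod_{i=1}^{n}(k+a+i),$$
so the left-hand side of \eqref{eq1} is a polynomial in the two variables $k$ and $a$. It is homogeneous of total degree $n$ in its top-degree part, and it has integer coefficients. The coefficient of $k^j$ is then
$$A_j(a)=e_{n-j}(a+1,a+2,\ldots,a+n),$$
where $e_m$ denotes the $m$-th elementary symmetric polynomial in the $n$ quantities $a+i$. This already shows that $A_j\in\Z[a]$.

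Next we read off the degree and the leading coefficient. Each term of $e_{n-j}(a+1,\ldots,a+n)$ is a product of exactly $n-j$ linear factors $a+i$, and each of these factors is monic of degree $1$ in $a$. So every term is a monic polynomial in $a$ of degree $n-j$. There are $\binom{n}{n-j}$ such terms, one for each choice of $n-j$ indices among $\{1,\ldots,n\}$.

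Adding them up, the coefficients of $a^{n-j}$ cannot cancel, since they are all equal to $1$. Hence $A_j$ has degree exactly $n-j$, with leading coefficient $\binom{n}{n-j}$.

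The only subtlety is justifying the identification of coefficients. The identity \eqref{eq1} is stated only for integers $k>|a|$, but for a fixed integer $a$ both sides are polynomials in $k$ that agree at infinitely many values of $k$. Their coefficients therefore coincide, and $A_j(a)$ is forced to equal $e_{n-j}(a+1,\ldots,a+n)$ for every integer $a$. A polynomial in $a$ is determined by its values at the integers, so this defines $A_j\in\Z[a]$ unambiguously.

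I do not expect any real obstacle here. The argument amounts to expanding $\prod_i\bigl((k+a)+i\bigr)$.

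A shortcut gives the same conclusion. Write $u=k+a$ and note that $\prod_{i}(u+i)=u^n+\cdots$. The part of total degree $n$ in $(k,a)$ is then $(k+a)^n$. By the binomial theorem, the coefficient of $k^ja^{n-j}$ in $(k+a)^n$ is $\binom{n}{j}=\binom{n}{n-j}$, and all other contributions to the coefficient of $k^j$ have lower degree in $a$.
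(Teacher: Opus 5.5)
Your proof is correct and is the paper's argument: the paper writes $A_j(a)=\sum_I\prod_{i\in I}(a+i)$ over the $(n-j)$-element subsets $I\subset\{1,\ldots,n\}$, which is your $e_{n-j}(a+1,\ldots,a+n)$, and the degree $n-j$ and leading coefficient $\binom{n}{n-j}$ follow directly from this formula. Your remark on identifying coefficients (two polynomials in $k$ that agree for infinitely many $k$ are equal) makes explicit a step the paper leaves implicit.
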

\proof
It is enough to note the equality
$$A_j(a)=\sum_I \prod_{i \in I} (a+i),$$
where $I$ runs through all the subset with $n-j$ elements of the set
$\{1,2, \ldots, n\}$.
\endproof
Using the resolution \eqref{res2A}, we get the following
\begin{equation}
\label{res2C1}
 \dim M(f)_{s+d-1}= \dim S_{s+d-1}-(n+1) \dim S_s+\sum_{k=1}^n (-1)^{k-1}
\left ( \sum_{i_k=1}^{m_k}\dim S_{s-d_{k,i_k}}\right )
\end{equation}
for any $s$ large enough.
Using now \eqref{eq1}, we get the following.

\begin{lem}
\label{lem2}
With the above notation, for any large integer $s$, one has the equality
 $$n! \dim M(f)_{s+d-1}=\sum_{j=0}^{n}B_js^j$$
 where
 $$B_j= A_j(d-1)-(n+1)A_j(0)+\sum_{k=1}^n (-1)^{k-1}\left ( \sum_{i_k=1}^{m_k}A_j(-d_{k,i_k})\right).
$$
 \end{lem}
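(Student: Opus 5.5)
The plan is to substitute the expansion \eqref{eq1} term by term into the alternating-sum formula \eqref{res2C1}. Each term there has the form $\dim S_{s+a}$, with $a=d-1$, $a=0$ or $a=-d_{k,i_k}$, and $\dim S_{s+a}$ is a polynomial in $s$ once $s$ is large. Concretely, I fix $s$ larger than $d-1$ and larger than all the $d_{k,i_k}$. Then for every shift $a$ that occurs we have $s>|a|$, so \eqref{eq1} applies with $k=s$ and gives
$$n!\dim S_{s+a}=\sum_{j=0}^n A_j(a)s^j.$$

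Next I multiply \eqref{res2C1} by $n!$ and substitute this expansion into each of the four kinds of terms:
\begin{itemize}
\item the term $\dim S_{s+d-1}$ contributes $A_j(d-1)$ to the coefficient of $s^j$;
\item the term $-(n+1)\dim S_s$ contributes $-(n+1)A_j(0)$;
\item each term $(-1)^{k-1}\dim S_{s-d_{k,i_k}}$ contributes $(-1)^{k-1}A_j(-d_{k,i_k})$.
\end{itemize}
Because everything is linear, I can exchange the finite sums over $k$ and $i_k$ with the sum over $j$. Collecting the coefficient of $s^j$ then gives exactly the stated $B_j$.

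One point needs care. Formula \eqref{res2C1} holds for all $s$, provided we use the convention $\dim S_m=0$ for $m<0$; this is simply the additivity of dimension along the exact graded resolution \eqref{res2A} in degree $s+d-1$. However, the binomial $\binom{m+n}{n}$ agrees with $\dim S_m$ only when $m\geq -n$, or more simply when $m\geq 0$. This is why we need $s$ large enough that every shift $s-d_{k,i_k}$ is nonnegative, which is guaranteed by the choice of $s$ above.

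There is no real obstacle. The only thing to check is this range condition, namely that all the degrees $s+a$ appearing are large enough for the polynomial expression \eqref{eq1} to be valid. Choosing $s>\max(d-1,\max_{k,i_k} d_{k,i_k})$ settles it.
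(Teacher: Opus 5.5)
Your proposal is correct and follows the paper's own argument. Additivity of dimensions along the resolution \eqref{res2A} in degree $s+d-1$ gives \eqref{res2C1}; you then substitute the expansion \eqref{eq1} term by term and collect the coefficients of $s^j$, and your explicit choice $s>\max(d-1,\max_{k,i_k} d_{k,i_k})$ simply makes precise the paper's phrase ``for $s$ large enough.''
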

Using these direct computations, we can prove Theorem \ref{thm1} as follows.
By definition of the Hilbert polynomial $P(M(f))$ we have
$$n!P(M(f))(s+d-1)=n! \dim M(f)_{s+d-1}.$$
Since the hypersurface $X$ is reduced, we have
$$\deg P(M(f))=\dim \Sigma \leq n-2,$$
and hence the coefficients of $s^n$ and of $s^{n-1}$ in Lemma \ref{lem2} must vanish. 
The coefficient of $s^n$ is clearly
$$B_n=1-(n+1)+ \sum_{k=1}^n (-1)^{k-1}m_k$$
and this proves the first part of claim (1).
To prove the remaining part, we note that the polynomial
$A_{n-1}(a)$ has the form
$$A_{n-1}(a)=na+b,$$
where $b \in \Z$ according to Lemma \ref{lem1}. It follows that one has
$$B_{n-1}=n(d-1)+b-(n+1)b+\sum_{k=1}^n (-1)^{k-1}\sum_{i_k=1}^{m_k}(-nd_{k,i_k}+b)=$$
$$=n\left(d-1-b+ \sum_{k=1}^n (-1)^{k}(\sum_{i_k=1}^{m_k}d_{k,i_k})+b\right).$$
This proves the second part of claim (1).

To prove the claim (2), we note that $\delta=\dim \Sigma$ implies that
$B_j=0$ for all $j>\delta$. Using Lemma \ref{lem1} we may write 
$$A_j(a)=c_{j,n-j}a^{n-j} + \ldots + c_{j,1}a+c_{j,0}$$
where 
$$c_{j,n-j}=\binom{n}{n-j}.$$
Consider first the case 
 $\delta=n-2$, the maximal possible value. Then the first claim in (2) is in fact the second part of claim (1), so there is nothing to prove.
To prove the second claim in (2), note that by Lemma \ref{lem2} we have
$$B_{n-2}=(c_{n-2,2}(d-1)^2+c_{n-2,1}(d-1)+c_{n-2,0})-(n+1)c_{n-2,0}+ $$
$$+\sum_{k=1}^n (-1)^{k-1}\sum_{i_k=1}^{m_k}(c_{n-2,2}d_{k,i_k}^2-c_{n-2,1}d_{k,i_k}+c_{n-2,0})=$$
$$=c_{n-2,2}\left( (d-1)^2+\sum_{k=1}^n (-1)^{k-1}\sum_{i_k=1}^{m_k}d_{k,i_k}^2 \right)+ c_{n-2,1}\left(d-1+ \sum_{k=1}^n (-1)^{k}\sum_{i_k=1}^{m_k}d_{k,i_k} \right)+$$
$$+c_{n-2,0}\left(-n
+\sum_{k=1}^n (-1)^{k-1}m_k\right).$$
The sums in the last two brackets vanish according to claim (1).
Since 
$$c_{n-2,2}= \binom{n}{n-2}= \frac{n!}{2!(n-2)!}$$
by Lemma \ref{lem1}, the claim follows using \eqref{Ppoly}.

Assume now that $\delta \leq n-3$ and hence the coefficient $B_{n-2}$ vanishes.
 So
the first bracket in the above formula for $B_{n-2}$ has to vanish as well. This gives us the first claim in (2),  that is $\sigma_2=-(d-1)^2$. As above, a simple computation using these vanishings shows that
$$B_{n-3}=c_{n-3,3}\left( (d-1)^3+\sum_{k=1}^n (-1)^{k}\sum_{i_k=1}^{m_k}d_{k,i_k}^3 \right).$$
Using this we complete the case $\delta=n-3$ as above.
By  decreasing induction on $\delta$ one proves the claim (2) for any
$\delta$ exactly as above, using Lemma \ref{lem1} and \eqref{Ppoly} for the second claim in (2).

To prove (3), we assume first that $X$ is smooth. Then the partial derivatives $f_0, \ldots,f_n$ form a regular sequence, and the resolution
\eqref{res2A} is just the corresponding Koszul complex. This clearly implies that 
$$m_k=\binom{n+1}{k+1} \text{ and } d_{k,1}=d_{k,2}=\ldots =d_{k,m_k}=k(d-1)$$
for any $k=1, \ldots, n$. Conversely, assume that these equalities hold for the graded Betti numbers of $X$. Using the claim (2), we have to show all the equalities
$$ \sum_{k=1}^n (-1)^{k+j}(\sum_{i_k=1}^{m_k}d_{k,i_k}^j)=(d-1)^j$$
for  $1 \leq j \leq n$.
Dividing by $(-1)^{j-1}(d-1)^j$, these equalities become $\sigma_j'=(-1)^{j-1}$,
where 
\begin{equation}
\label{eq10}
\sigma_j'=\sum_{k=1}^n (-1)^{k+1}\binom{n+1}{k+1}k^j
\end{equation}
 for  $1 \leq j \leq n$.
To check these equalities, we start with the obvious equality 
\begin{equation}
\label{eq11}
\sum_{i=0}^{n+1} (-1)^{i}\binom{n+1}{i}t^i=1-(n+1)t+\sum_{k=1}^{n} (-1)^{k+1}\binom{n+1}{k+1}t^{k+1}=(1-t)^{n+1}.
\end{equation}
If we set $t=1$ in this equality, we get exactly the first equality in the claim (1), that is 
$$\sigma_0=\sigma_0'=n.$$ 
Now we take the derivative with respect to $t$ in \eqref{eq11}
and we get
\begin{equation}
\label{eq12}
-(n+1)+\sum_{k=1}^{n} (-1)^{k+1}\binom{n+1}{k+1}(k+1)t^{k}=-(n+1)(1-t)^{n}.
\end{equation}
If we set $t=1$ in \eqref{eq12}, we get 
$$-(n+1)+\sum_{k=1}^{n} (-1)^{k+1}\binom{n+1}{k+1}(k+1)=-(n+1)+\sigma_1' +\sigma_0'=0.$$
This gives $\sigma_1'=1$. 
To get $\sigma_2'=-1$, we multiply the equality \eqref{eq12} by $t$, then take again the derivative with respect to $t$ and set $t=1$.
Using $(k+1)^2=k^2+2k+1$, this gives 
$$-(n+1)+\sigma_2'+2 \sigma_1'+\sigma_0'=0,$$
which yields $ \sigma_2'=-1$.
Continuing in this way, we get all the relations $\sigma'_j=(-1)^{j-1}$ for
$1 \leq j \leq n$ and conclude that the singular subscheme $\Sigma$ is empty.
This completes the proof of Theorem \ref{thm1}.




One has the following result, see \cite[Theorem 5.3]{duPCTC01}.
\begin{thm}
\label{thmC}
If the hypersurface $X$ has at most isolated singularities and $d_{1,1}=\min_{i_1}d_{1,i_1}$,
then
$$(d-1)^n-d_{1,1}(d-1)^{n-1} \leq \tau(X) \leq (d-1)^n-d_{1,1}(d-d_{1,1}-1)(d-1)^{n-2}.$$
\end{thm}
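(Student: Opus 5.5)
The plan is to compare $\Sigma$ with a zero-dimensional complete intersection and use liaison. After a general linear change of coordinates, I would replace the partials by general linear combinations $g_0,\ldots,g_n$ of $f_0,\ldots,f_n$. These still generate $J_f$. Since $\dim\Sigma=0$, Bertini-type genericity should make $g_1,\ldots,g_n$ a regular sequence. They then cut out a zero-dimensional complete intersection $Z\subset\PP^n$ with $\deg Z=(d-1)^n$ and $\Sigma\subset Z$, so $\tau(X)=\deg\Sigma$ by \eqref{ab0}. Let $Z'$ be the scheme linked to $\Sigma$ by $Z$, with ideal $I_{Z'}=(g_1,\ldots,g_n):J_f=(g_1,\ldots,g_n):g_0$. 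Then
$$\tau(X)=(d-1)^n-\deg Z',$$
and both inequalities become bounds on $\deg Z'$.

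The key algebraic step identifies $I_{Z'}$ with syzygies. A form $a_0$ of degree $k$ lies in $I_{Z'}$ exactly when $a_0g_0+a_1g_1+\cdots+a_ng_n=0$ for some forms $a_i$ of degree $k$. So $(I_{Z'})_k/(g_1,\ldots,g_n)_k$ is the image of the degree-$k$ Jacobian syzygies under projection to the first component. Taking $r=d_{1,1}$, there is a nonzero syzygy $\rho=(a_0,\ldots,a_n)$ of degree $r$. For general coordinates $a_0\notin(g_1,\ldots,g_n)$, otherwise one could subtract a Koszul syzygy and produce a smaller one. Conversely, no element of $I_{Z'}$ of degree $<r$ lies outside $(g_1,\ldots,g_n)$.

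For the lower bound on $\tau(X)$, note that $a_0$ vanishes on $Z'$. Hence $Z'\subset Z\cap\{a_0=0\}$. After choosing $a_0$ general, with no common component with the curve $g_1=\cdots=g_{n-1}=0$, Bezout gives
$$\deg Z'\le \deg\bigl(\{g_1=\cdots=g_{n-1}=a_0=0\}\bigr)=r(d-1)^{n-1}.$$
This yields $\tau(X)\ge (d-1)^n-r(d-1)^{n-1}$.

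The upper bound, $\deg Z'\ge r(d-r-1)(d-1)^{n-2}$, is the step I expect to be the main obstacle. I would use the Hilbert function of the Gorenstein artinian ring $A=S/(g_1,\ldots,g_n,\ell)$, with $\ell$ a general linear form, whose socle degree is $n(d-2)$. Linkage duality gives
$$h_{Z}(k)-h_{\Sigma}(k)=\Delta\text{-dual of } h_{Z'}\text{ at } n(d-2)-k.$$
The fact that $I_{Z'}$ agrees with the complete intersection ideal in degrees $<r$ forces $h_{Z'}$ to coincide with $h_Z$ up to degree $r-1$. Summing the first differences of $h_{Z'}$ over the range $r\le k\le d-2$ and using the symmetric growth of complete-intersection Hilbert functions should give at least $r(d-r-1)(d-1)^{n-2}$. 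To organize this, I would first try reducing to the case $n=2$ by cutting with $n-2$ general hyperplanes. For $n=2$, the bound $\deg Z'\ge r(d-r-1)$ is the classical du Plessis--Wall count. The extra factor $(d-1)^{n-2}$ should come from the degrees of the remaining $g_i$ in that reduction. The delicate points are justifying the hyperplane reduction and the case $r>(d-1)/2$, where the bound must be handled separately, presumably by using the syzygy of degree $r$ together with the absence of smaller ones.
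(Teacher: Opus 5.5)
The paper gives no proof of this statement; it quotes it from du Plessis--Wall (Theorem 5.3 there), so your argument has to stand on its own. Your linkage set-up is correct: $\tau(X)=(d-1)^n-\deg Z'$, with $I_{Z'}=(g_1,\ldots,g_n):g_0$ equal to the set of first coordinates of syzygies. Your lower bound is also correct, after two repairs.

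First, the reason you give for $a_0\notin(g_1,\ldots,g_n)$ is not valid, since subtracting Koszul syzygies never lowers the degree. The correct reasoning is as follows. The Koszul syzygies give $r\le d-1$. For $r<d-1$ one has $(g_1,\ldots,g_n)_r=0$, so only $a_0\neq 0$ is needed, and this holds in general coordinates. For $r=d-1$ both inequalities are trivial.

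Second, $a_0$ cannot be chosen general independently of $g_1,\ldots,g_n$. Impose genericity on the whole basis $g_0,\ldots,g_n$; equivalently, keep $g_0$ and $a_0$ and take $g_1,\ldots,g_{n-1}$ general in $(I_Z)_{d-1}$. Then a dimension count shows that $Y'=V(g_1,\ldots,g_{n-1},a_0)$ is finite.

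The genuine gap is the upper bound. For it you give only a plan, and the plan does not work as stated.

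\begin{itemize}
\item Cutting by $n-2$ general hyperplanes reduces nothing. A general hyperplane misses the zero-dimensional schemes $\Sigma$, $Z$, $Z'$. For $n\ge 3$, a general hyperplane section of $X$ is smooth, so neither $\tau$ nor $r$ passes to it.
\item The Hilbert function data you list are insufficient. You use only $Z'\subset Z$, $(I_{Z'})_k=0$ for $k<r$, and a form of degree $r$ in $I_{Z'}$. But for $d\gg r$, take $\binom{r+n-1}{n}$ points of a reduced complete intersection of type $(d-1,\ldots,d-1)$ that impose independent conditions on forms of degree $r-1$. This set has all these properties, yet its degree is far below $r(d-r-1)(d-1)^{n-2}$.
\item Linkage duality with $\Sigma$ is a tautology unless you feed in further information about $\Sigma$.
\item No case distinction at $r>(d-1)/2$ is needed for this bound.
\end{itemize}

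The missing idea is to use the other coordinates of the minimal syzygy $\rho$, not only $a_0$.

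One way to close the gap is to link $Z'$ a second time, through the complete intersection $Y'\supset Z'$ of degree $r(d-1)^{n-1}$ from your lower bound. Let $u\in I_{Z'}$ and write $ug_0=\sum_{i=1}^n\beta_ig_i$. Substituting $a_ng_n=-a_0g_0-\sum_{i<n}a_ig_i$ gives
\[
g_0\,(a_nu+\beta_na_0)=\sum_{i=1}^{n-1}(\beta_ia_n-\beta_na_i)\,g_i .
\]
For general choices, $g_0$ is a nonzerodivisor modulo $(g_1,\ldots,g_{n-1})$, because $V(g_0,\ldots,g_{n-1})$ is finite. Hence $a_nu\in I_{Y'}$, that is, $a_nI_{Z'}\subset I_{Y'}$.

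It follows that the residual scheme $Z''$ of $Z'$ in $Y'$ lies in $V(g_1,\ldots,g_{n-2},a_0,a_n)$. For general coordinates this is a finite complete intersection of degree $r^2(d-1)^{n-2}$. Therefore
\[
\deg Z'=r(d-1)^{n-1}-\deg Z''\ge r(d-r-1)(d-1)^{n-2},
\]
which is exactly the upper bound for $\tau(X)$. The finiteness of $V(g_1,\ldots,g_{n-2},a_0,a_n)$ needs a genericity argument of the same kind as the one for $Y'$.
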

On the other hand, Theorem \ref{thm1} (2) for $\delta=0$ implies that
\begin{equation}
\label{eq13}
n! \tau(X)= (d-1)^{n}+\sum_{k=1}^n (-1)^{k+n-1}(\sum_{i_k=1}^{m_k}d_{k,i_k}^{n}) .
\end{equation}
To prove Corollary \ref{corC} it is enough to multiply the inequalities in Theorem \ref{thmC} by $n!$, replace $n! \tau(X)$  by the above value of it and finally simplify one term $(d-1)^n$.

\begin{rk}
\label{rkC} 
The lower bound in Theorem \ref{thmC} is attained for any pair $(d,d_{1,1})$.
Indeed, it is enough to find a degree $d$ reduced curve $C: f'(x_0,x_1,x_2)=0$ such that $d_{1,1}$ is the minimal exponent of $C$ and
$$\tau(C)=(d-d_{1,1}-1)(d-1),$$
and then take $X:f=0$, with 
$$f=f'(x_0,x_1,x_2)+x_3^d+ \ldots + x_n^d.$$
The existence of curves $C$ as above is shown in \cite[Example 4.5]{3syz} and a complete characterization of them is given in \cite[Theorem 3.5 (1)]{3syz}.
\end{rk}

\section{Various restrictions on the  graded Betti numbers of Jacobian algebras}
 
As we have already noted in the case of surfaces in $\PP^3$, see \cite[Remark 3.7]{Betti3}, the graded Betti numbers ${\bf d}_k$ for $k=1,\dots,n$ satisfy certain inequalities. To state them, recall that the sequences
${\bf d}_k$ are ordered, that is
$$d_{k,1} \leq d_{k,2} \leq \ldots \leq d_{k,m_k},$$
when $m_k \geq 2$.
Note first that $m_1 \geq n$, with equality if and only if $X:f=0$ is a free hypersurface. Then, in full generality, one clearly has
\begin{equation}
\label{eqR1}
 d_{1,1} \geq 0,
\end{equation}
with equality if and only if $X$ is a cone over a hypersurface in $\PP^{n-1}$.
If $m_2>0$, that is if $X$ is not a free hypersurface, a secondary syzygy must involve at least three primary syzygies, see \cite[Remark 3.7]{Betti3}.
Using  the minimality of the resolution \eqref{res2A}, we deduce in exactly the same way that in general
if $m_j>0$ for some $j$ satisfying $2 \leq j \leq n$, then $m_{j-1} \geq 3$
and
\begin{equation}
\label{eqR2}
 d_{j,1}=\min _{i_j} d_{j,i_j}>\max\{d_{j-1,1},d_{j-1,2},d_{j-1,3}\}.
\end{equation}
When the hypersurface $X$ has at most isolated singularities, we also have
stronger restrictions on the corresponding Betti numbers.
We recall that the Castelnuovo-Mumford regularity $\reg M(f)$ is defined as follows in terms of the minimal resolution
\eqref{res2A}
\begin{equation}
\label{eqREG}
\reg M(f)= \max_{k=1,n}(d_{k,i_{m_k}}+d-1-k-1),
\end{equation}
see \cite[Section 4A]{Eis}, since our $E_k$ corresponds to $F_{k+1}$ in that definition. For the next result, see \cite[Proposition 2.3]{Hess}.
 \begin{thm}
\label{thm2}
The Castelnuovo-Mumford regularity of the Jacobian algebra $M(f)$ of a  singular hypersurface $X:f=0$ of degree $d\geq 3 $ in $\PP^n$ having only isolated singularities satisfies
$$\reg M(f) \leq  (n+1)(d-2)-1.$$
In other words, the Betti numbers coming from the minimal resolution
\eqref{res2A} satisfy the inequalities
$$(I_k) \ :  \ d_{k,i_{m_k}} \leq n(d-2)+k-1$$
for any $k=1,\ldots,n$.
\end{thm}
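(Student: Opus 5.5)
The bound on $\reg M(f)$ is the real content; the inequalities $(I_k)$ then follow formally from the definition \eqref{eqREG}. Indeed,
$$\reg M(f) \leq (n+1)(d-2)-1$$
is equivalent to
$$d_{k,m_k}+d-1-k-1 \leq (n+1)(d-2)-1$$
for every $k$, that is,
$$d_{k,m_k} \leq (n+1)(d-2)-d+k+1 = n(d-2)+k-1.$$
So the plan is to prove the regularity bound and then translate it.

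For the regularity bound I would use local cohomology. Since $\Sigma$ is finite, the saturation $\hat J_f$ of $J_f$ has $S/\hat J_f$ one-dimensional and Cohen--Macaulay. Hence $H^0_{\mathfrak m}(M(f)) = \hat J_f/J_f$ and $H^1_{\mathfrak m}(M(f))$ are the only nonzero local cohomology modules, and
$$\reg M(f) = \max\{\operatorname{end}(H^0_{\mathfrak m}(M(f))),\ \operatorname{end}(H^1_{\mathfrak m}(M(f)))+1\}.$$

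For the $H^1$ term: the Hilbert function of $S/\hat J_f$ is nondecreasing and becomes constant equal to $\tau(X)$ exactly once $k$ passes $\reg(S/\hat J_f)$. I would show that $\dim M(f)_k = \tau(X)$ already for $k \geq T-\operatorname{ct}(f)$, where $T=(n+1)(d-2)$. This uses the known stabilization result for Milnor algebras with isolated singularities: the map $M(f)_k \to \oplus_s M(f_s)$ is surjective for $k \geq T - \operatorname{ct}(f)$ (du Plessis--Wall / Dimca). Simpler, and sufficient here, is to take a generic linear change of coordinates and compare with a regular sequence.

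For the $H^0$ term: I would invoke the self-duality of the saturation quotient, $(\hat J_f/J_f)_k \cong (\hat J_f/J_f)_{T-k}^\vee$. This is due to Sernesi and van Straten--Warmt for isolated singularities, and comes from the Koszul complex of $f_0,\dots,f_n$ together with local duality. Since $(\hat J_f/J_f)_k=0$ for $k<d-1$ (as $J_f$ starts in degree $d-1$ and $\hat J_f$ agrees with $J_f$ in low degree, or at worst in degrees $<d-1$ where $J_f=0$), the duality gives vanishing for $k>T-(d-1)$. Thus $\operatorname{end}(H^0)\leq T-d+1 \leq T-1$ since $d\ge 3$. The $H^1$ contribution gives $\leq T-1$ provided one shows $H^1_{\mathfrak m}(M(f))_k=0$ for $k\geq T-1$, i.e. $\dim M(f)_k=\tau(X)$ for $k\ge T-1$.

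The main obstacle is this last stabilization statement. I would try to get it from the Koszul complex on $f_0,\dots,f_n$: its homology $H_1$ has finite-length-supported structure, and the Euler characteristic comparison of $\dim M(f)_k$ with the smooth case (Hilbert series $((1-t^{d-1})/(1-t))^{n+1}$, vanishing beyond degree $T$) shows the defect from $\tau$ is measured by $H_1$ in degree $k$. That defect should vanish for $k\ge T-1$ via the same duality applied to $H_1$.
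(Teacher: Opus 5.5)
Your overall architecture is the right one: translate $(I_k)$ into the regularity bound, then bound $\operatorname{end} H^0_{\mathfrak m}(M(f))$ and $\operatorname{end} H^1_{\mathfrak m}(M(f))+1$. The paper gives no argument of its own and only quotes \cite[Proposition 2.3]{Hess}, so the proof has to stand on its own. It does not, because both local cohomology estimates rest on false claims.

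\textbf{The $H^0$ estimate.} The module $N(f)=\hat J_f/J_f$ does not vanish in degrees $k<d-1$. In those degrees it equals $(\hat J_f)_k$, the forms containing the scheme $\Sigma$, and this is usually nonzero. Take the nodal cubic $f=y^2z-x^3-x^2z$. Here $\tau(X)=1$, $T=3$ and $\hat J_f=(x,y)$, so $N(f)_1$ is $2$-dimensional. By the duality, $N(f)_{T-1}=N(f)_2\neq 0$, so $\operatorname{end} H^0_{\mathfrak m}(M(f))=T-1$, not $\le T-d+1$. The only input needed is $N(f)_0=0$, which holds because $\Sigma\neq\emptyset$ forces $1\notin\hat J_f$. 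This is exactly where singularity of $X$ is used: for smooth $X$ one has $N(f)=M(f)$ and $\reg M(f)=T$. With $N(f)_0=0$ the duality gives the sharp bound $\operatorname{end} N(f)\le T-1$.

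\textbf{The $H^1$ estimate.} The module $H^1_{\mathfrak m}(M(f))_k\cong H^1_{\mathfrak m}(S/\hat J_f)_k$ has dimension $\tau(X)-\dim(S/\hat J_f)_k$. On the other hand, $\dim M(f)_k=\dim(S/\hat J_f)_k+\dim N(f)_k$. So its vanishing means that $\Sigma$ imposes independent conditions on forms of degree $k$; it does not mean $\dim M(f)_k=\tau(X)$.

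The statement you single out as the main obstacle is false. In the example above, $\dim M(f)_{T-1}=\dim M(f)_2=6-3=3\neq 1=\tau(X)$. Your stronger claim for $k\ge T-\operatorname{ct}(f)$ therefore fails too. In that range one only has surjectivity of $S_k$ onto the sum of the local Tjurina algebras.

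\textbf{A correct way to finish.} Let $K$ be the Koszul complex on $f_0,\dots,f_n$. Since $J_f$ has grade $n$, the only homology of $K$ is $H_0=M(f)$ and $H_1(K)$.
\begin{enumerate}
\item The self-duality of $K$, together with $\operatorname{Ext}^{j}_S(H_1(K),S)=0$ for $j<n$, gives $\operatorname{Ext}^n_S(M(f),S)\cong H_1(K)((n+1)(d-1))$.
\item Local duality then gives $\dim H^1_{\mathfrak m}(M(f))_k=\dim H_1(K)_{T-k}$.
\item $H_1(K)$ is a subquotient of $S(1-d)^{n+1}$, so it vanishes in degrees $<d-1$. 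Hence $H^1_{\mathfrak m}(M(f))_k=0$ for $k\ge T-d+2$.
\item Since $d\ge 3$, we have $T-d+2\le T-1$.
\end{enumerate}
Together these give $\reg M(f)\le\max(T-1,\,T-d+2)=T-1$.

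Note that the roles are the reverse of your sketch. The $H^0$ term is the one that can reach $T-1$, and the hypothesis $d\ge 3$ is used on the $H^1$ side.
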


We have shown that the inequalities in Corollary \ref{corC} are not an arithmetical consequence of the relations given in \eqref{eqR1}, \eqref{eqR2} and in Theorem \ref{thm2} combined with the equalities given in Theorem \ref{thm1}, claims (1) and (2) when $n=2$ or $n=3$. Indeed, the case $n=2$ is discussed in \cite[Remark 5.3]{Betti2} and the case $n=3$ is discussed in \cite[Remark 3.7]{Betti3}. 
We conjecture that a similar situation occurs for any $n \geq 2$, and this shows the interest in Corollary \ref{corC}.

A simpler remark is that a set of {\it potential } graded Betti numbers ${\bf d}_k $ satisfying all the above conditions  in \eqref{eqR1}, \eqref{eqR2} and Theorem \ref{thm2} and satisfying also the claims in Theorem \ref{thm1} (1) and the first claim in Theorem \ref{thm1} (2) fails to be the {\it actual} Betti numbers of a hypersurface $X$ with $\dim \Sigma = \delta$ if the rational number given by the second claim in Theorem \ref{thm1} (2) is either negative, or is not an integer. 
However, this last situation cannot occur, since one has the following result.
\begin{prop}
\label{propGS}
Assume that an integer $d \geq 3$ and a set of  integers 
$${\bf d}_k=(d_{k,1}, \ldots, d_{k,m_k}) \text{ for } k=1, \ldots, n,  
 $$ 
 not  necessarily coming from the minimal resolution of a Jacobian algebra $M(f)$, 
satisfy the relations 
$$\sigma_j=(-1)^{j+1}(d-1)^j$$
for any $1 \leq j <t $, with $t \leq n$ an integer, where $\sigma_j$ is defined as in \eqref{eqS}. Then the integer
$$N_t=(d-1)^t+(-1)^t\sigma_t$$ is divisible by $t!$.
\end{prop}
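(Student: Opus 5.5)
The plan is to recognize $N_t$ as $t!$ times the value of an integer-valued polynomial, namely the binomial-coefficient version of the alternating sum. Put $a_0=-(d-1)$ with sign $\epsilon_0=+1$ (the term $S(1-d)$ at the start of the resolution, up to the free summand $S$). For each $k\ge1$ and $i_k$, put $a=d_{k,i_k}$ with sign $\epsilon=(-1)^{k+1}$. The hypotheses say exactly that the power sums
$$p_j=\sum \epsilon\, a^j$$
vanish for $1\le j<t$. This is because $\sigma_j-(-1)^{j+1}(d-1)^j=\sigma_j+(-(d-1))^j$, and this quantity is $0$ by assumption. Likewise $p_t=\sigma_t+(-1)^t(d-1)^t=(-1)^tN_t$. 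So we must show that $t!$ divides $p_t$ whenever $p_1=\dots=p_{t-1}=0$.

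The key step is to expand $x^t$ in the falling-factorial or binomial basis:
$$x^t=\sum_{j=0}^{t} S(t,j)\, j!\binom{x}{j},$$
where the $S(t,j)$ are the Stirling numbers of the second kind and $S(t,t)=1$. Then form the integer-valued combinations
$$q_j=\sum \epsilon\binom{a}{j}\in\Z.$$
These are integers because every $a$ is an integer; negative $a$, such as $a_0$, also give integer values of $\binom{a}{j}$ as a polynomial. Conversely, $\binom{x}{j}$ is a polynomial in $x$ of degree $j$ with rational coefficients and no constant term for $j\ge1$. Hence for $1\le j<t$ the sum $q_j$ is a rational combination of $p_1,\dots,p_j$, all of which vanish, so $q_j=0$. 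The expansion then gives
$$p_t=\sum_{j=0}^{t}S(t,j)\,j!\,q_j=S(t,0)q_0+t!\,q_t.$$

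The only remaining care point is the $j=0$ term. Here $S(t,0)=0$ for $t\ge1$, so it drops out regardless of the value of $q_0=1+\sigma_0$. Hence $p_t=t!\,q_t$, and therefore $N_t=(-1)^tp_t$ is divisible by $t!$. The main obstacle is only bookkeeping: checking that the sign conventions make the $(d-1)$ term enter exactly as the power $(-(d-1))^j$ with coefficient $+1$. I would verify this directly against \eqref{eqS} and the statement before writing the argument.
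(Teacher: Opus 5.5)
Your argument is correct and is essentially the paper's proof. Both use a monic degree-$t$ polynomial with zero constant term whose integer values are multiples of $t!$: you take the falling factorial $t!\binom{x}{t}$, the paper takes the rising factorial $P(z)=z(z+1)\cdots(z+t-1)$. In both, the vanishing of the lower signed power sums leaves only the degree-$t$ term. Your device of putting $-(d-1)$ into the signed multiset with sign $+1$ does up front what the paper does by evaluating $-P(-(d-1))$ separately. The Stirling-number expansion is a harmless detour, since directly $t!\,q_t=\sum\epsilon\,a(a-1)\cdots(a-t+1)=p_t$ once $p_1=\dots=p_{t-1}=0$.
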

\proof
Consider the polynomial
$$P(z)=z(z+1)\ldots (z+t-1)=z^t+c_1z^{t-1}+\ldots+ c_{t-1}z.$$
For any integer $z$, the integer $P(z)$ is divisible by $t!$, as being the product of $t$ consecutive integers. It follows that
$$\sigma_t+c_1\sigma_{t-1}+ \ldots + c_{t-1}\sigma_1$$
is divisible by $t!$. Using the relations satisfied by $\sigma_j$ for $1 \leq j <t$, we get that
$\sigma_t+R$, where 
$$R=(-1)^{t}c_1(d-1)^{t-1} + \ldots +c_{t-1}(d-1)$$
is divisible by $t!$. Note that
$$-P(-(d-1))=(-1)^{t+1}(d-1)^t+R$$
is also divisible by $t!$, and hence the difference
$$\sigma_t+R -((-1)^{t+1}(d-1)^t+R)=\sigma_t+(-1)^t(d-1)^t$$ is divisible by $t!$.
Multiplying by $(-1)^t$ we get  our claim.
\endproof
We conclude this section by some examples of potential graded Betti numbers
related to  3-folds in $\PP^4$ with isolated singularities.
These numbers satisfy some of the above conditions, but not all of them, so they are not the actual Betti numbers of a 3-fold $X$ with isolated singularities.
\begin{ex}
\label{ex1}
Consider the following set of potential graded Betti numbers
$${\bf d}_1=(2_9,3), {\bf d}_2=(4_7,5_3), {\bf d}_3=(6_2,7_3) \text{ and } {\bf d}_4=(9).$$
Hence $m_1=m_2=10$, $m_3=5$ and $m_4=1$.
For $d=3$  and $\delta=0$, they satisfy the conditions given in \eqref{eqR1}, \eqref{eqR2} and  the equalities from Theorem \ref{thm1}, claims (1) and (2), but not the positivity condition from Theorem \ref{thm1}, (2). Indeed, a direct computation using the formula given in the second claim in Theorem \ref{thm1} (2) yields
$$\tau(X)=\deg(\Sigma)=-8$$
for any 3-fold $X$ in $\PP^4$ having these graded Betti numbers.
In addition, the inequalities $(I_3)$ and $(I_4)$ from
Theorem \ref{thm2} are not satisfied.
 Therefore we have several obstructions to the existence of such a 3-fold.
\end{ex}

 \begin{ex}
\label{ex2}
Consider the following set of potential graded Betti numbers
$${\bf d}_1=(2_{10},10_{17},14_{17}), {\bf d}_2=(4_{10},11_{68}), {\bf d}_3=(6_5,12_{102}) \text{ and } {\bf d}_4=(8,13_{68}).$$
Hence $m_1=44$, $m_2=78$, $m_3=107$ and $m_4=69$.
For $d=3$  and $\delta=0$, they satisfy the same conditions as in Example \ref{ex1}. In this case all the inequalities $(I_k)$ for
$k=1,2,3,4$ fail.
A direct computation  yields
$$\sigma_4=392.$$
For any 3-fold $X$ in $\PP^4$ having these graded Betti numbers Corollary \ref{corC} yields
$$-16 \leq \sigma_4 \leq 368.$$
Hence we have again two obstructions to the existence of such a 3-fold.
Note that in this case the value of $\tau(X)$ given by Theorem \ref{thm1}
is positive, hence we need either Corollary \ref{corC} or Theorem \ref{thm2} to conclude.
\end{ex}

\section{Some generalities and the proof of Corollary \ref{corD}}
Recall that if a graded $S$-module $M$ has a minimal free resolution
$$0 \to F_p \to \ldots \to F_1 \to F_0,$$
where $F_p \ne 0$, then one says that the {\it projective dimension} $\pd M$ of $M$ is equal to $p$, see for instance \cite{Eis}. Sometimes this invariant is called the {\it homological dimension} and it is denoted by $\hd M$, as for instance in \cite{RT}. If we compare this definition with our resolution \eqref{res2A} it follows that we have
\begin{equation}
\label{eqE100}
\pd M(f) = \max \{k \ : \ 1 \leq k \leq n \text{ such that } m_k>0\}+1.
\end{equation}
In particular, $\pd M(f) \leq n+1$.
Let ${\bf p}_i$ for $i=1, \ldots,q$ be the associated prime ideals of the $S$-module $M(f)$. Then it is known that
\begin{equation}
\label{eqE101}
\pd M(f) \geq \codim {\bf p}_i,
\end{equation}
for $i=1, \ldots,q$, where the codimension is taken with respect to the polynomial ring $S$, or equivalently, in the ambient affine space $\C^{n+1}$, see  \cite[Corollary A.2.16]{Eis} or even better \cite[Exercise 19.8]{Eis0}. The minimal associated prime ideals of $M(f)$ correspond exactly to the irreducible components of the support $|\Sigma|$ of the singular scheme $\Sigma$, if this scheme is non empty. In this way we get the following.
\begin{cor}
\label{cor101}
If the hypersurface $X:f=0$ is singular, then 
$$\pd M(f) \geq \codim \Sigma$$
or, equivalently
$$\dim \Sigma \geq n-1-\max \{k \ : \ 1 \leq k \leq n \text{ such that } m_k>0\}.$$
In particular, if $\dim \Sigma=0$, then either $E_n \ne 0$ or $E_{n-1} \ne 0$ in the resolution \eqref{res2A}.
\end{cor}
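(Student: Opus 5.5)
The plan is to combine the inequality \eqref{eqE101} with the identification of the minimal associated primes of $M(f)$ with the irreducible components of $|\Sigma|$, and then translate via \eqref{eqE100}. Since $X$ is singular, $\Sigma \neq \emptyset$, so $J_f$ is a proper homogeneous ideal not primary to the irrelevant ideal. Indeed, if it were, then $f_0,\ldots,f_n$ would have no common zero in $\PP^n$, and by the Euler relation $d f=\sum x_if_i$ the hypersurface $X$ would be smooth. Hence $M(f)$ has a minimal associated prime ${\bf p}$ corresponding to an irreducible component $Z$ of $|\Sigma|$ with $\dim Z=\dim \Sigma$.

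For this prime, $\codim {\bf p}$ in $S$ equals the codimension of the affine cone over $Z$ in $\C^{n+1}$. That cone has dimension $\dim Z+1$, so $\codim {\bf p}=(n+1)-(\dim\Sigma+1)=n-\dim \Sigma$, which is exactly $\codim \Sigma$ computed in $\PP^n$. Applying \eqref{eqE101} to ${\bf p}$ gives $\pd M(f)\geq \codim \Sigma$. This is the first claim.

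For the equivalent form, I will substitute \eqref{eqE100}, $\pd M(f)=\max\{k: m_k>0\}+1$, where $m_1\geq n>0$ ensures the maximum exists. The inequality becomes $\max\{k : m_k>0\}+1\geq n-\dim\Sigma$, that is, $\dim\Sigma\geq n-1-\max\{k: m_k>0\}$. For the final claim, take $\dim\Sigma=0$: then $\max\{k : m_k>0\}\geq n-1$, so $m_{n-1}>0$ or $m_n>0$, meaning $E_{n-1}\neq 0$ or $E_n\neq 0$.

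The only delicate step is the bookkeeping of codimension: \eqref{eqE101} measures codimension in $S$, i.e.\ in the affine cone, while $\codim\Sigma$ is measured in $\PP^n$. The two agree because both equal $n-\dim\Sigma$. One must also choose a top-dimensional component, so that the codimension is the minimal one; any minimal prime would give at least this bound, since its codimension is at least $\codim\Sigma$. The remaining translations are immediate.
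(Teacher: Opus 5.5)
Your proposal is correct and follows the paper's argument exactly: apply \eqref{eqE101} to a minimal associated prime of $M(f)$ coming from an irreducible component of $|\Sigma|$, then rewrite the inequality using \eqref{eqE100}. Your explicit codimension bookkeeping (affine cone in $\mathbb{C}^{n+1}$ versus $\PP^n$) and the Euler-relation argument that $\Sigma\neq\emptyset$ only fill in details the paper leaves implicit.
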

In general the inequality in Corollary \ref{cor101} is rather weak. For instance, if $X$ is a hyperplane arrangement having at least two hyperplanes, then
$\codim \Sigma=2$. On the other hand, if the Jacobian ideal $J_f$ is not saturated, then the maximal ideal
$${\it m}=(x_0,\ldots,x_n) \subset S$$
is associated with $M(f)$ as noted in \cite[page 2 of Introduction]{MNS}, and hence in this case
$$n+1\geq \pd M(f)\geq \codim {\bf m}=n+1,$$
and hence $\pd M(f)=n+1$, the maximal possible value.
This equality happens in particular for a generic hyperplane arrangement, see \cite[Theorem 4.5.3]{RT}.

If $X:f=0$ is a HSPOG hypersurface in $\PP^n$, then obviously by
\eqref{eqE100} one has $\pd M(f)=3$, and hence Corollary \ref{cor101} implies that $\dim \Sigma \geq n-3$. It follows that Corollary \ref{corD} gives a stronger claim in this situation under some additional mild restrictions on $d=\deg f$ and on the maximal value of the exponents $d_{1,i}$.

To prove Corollary \ref{corD}, we simplify the notation by setting $d_j=d_{1,j}$ for all $j=1,\ldots, n+1$ and we do no longer assume that
$$d_1 \leq d_2 \leq \ldots \leq d_{n+1}.$$
With this notation, we may assume with no loss of generality that $d_{2,1}=d_{n+1}+1$. 
These are in fact the notations used in \cite[Theorem 4.5]{Bour}. Using this
result, or our Theorem \ref{thm1} (1) we see that
\begin{equation}
\label{eqE10}
d_1+\ldots +d_n=d.
\end{equation}
Using Theorem \ref{thm1} (2), we see that $\dim \Sigma=n-2$ if and only if $A>0$, where
\begin{equation}
\label{eqE11}
A=(d-1)^2+\sigma_2=(d-1)^2+\sum_{j=1}^{n+1}d_j^2-(d_{n+1}+1)^2= d^2-2d+\sum_{i=1}^{n}d_i^2-2d_{n+1}.
\end{equation}
Hence $A>0$ is equivalent to
$$d^2+\sum_{i=1}^{n}d_i^2-2d>2d_{n+1}.$$
If $n=3$, then either $\dim\Sigma =1$ and there is nothing to prove, or
$\dim \Sigma =0$ and then $d_{n+1} \leq 6(d-2)$ by condition $(I_1)$ in
Theorem \ref{thm2}.
Therefore, using our  assumption (1), we get that
it is enough in any of the two cases to show that
$$d^2+\sum_{i=1}^{n}d_i^2-2d>2n(d-2).$$
Finally, using the well known inequality
$$n\sum_{i=1}^{n}d_i^2 \geq (\sum_{i=1}^{n}d_i)^2=d^2,$$
it follows that $A>0$ follows from the inequality
\begin{equation}
\label{eqE13}
g(d)=(n+1)d^2-2n(n+1)d+4n^2>0.
\end{equation}
The discriminant of this second degree polynomial in $d$ is
$$\Delta=4n^2((n+1)^2-4(n+1))=4n^2(n^2-2n-3).$$
For $n=3$ we get $\Delta=0$ and $g(d)=4(d-3)^2$, hence the inequality \eqref{eqE13} holds for any
 $d >3$.
 For $d \geq 4$, we see that the largest root $d'$ of the equation $g(d)=0$ satisfies
 $$d' =\frac{n(n+1+\sqrt {n^2-2n-3})}{n+1} < \frac{n(n+1+(n-1))}{n+1} =\frac{2n^2}{n+1}.$$
 Since the inequality \eqref{eqE13} holds for any
 $d>d'$, this proves our claims in Corollary \ref{corD}. Indeed, for $n=4$ we get the approximative value $d'=5.78$.



\end{document}